\newtheorem{thm}{Theorem}[section]
\newtheorem{lem}[thm]{Lemma}
\newcommand{\be}{\begin{equation}}
\newcommand{\ee}{\end{equation}}
\newcommand{\ben}{\begin{enumerate}}
\newcommand{\een}{\end{enumerate}}
\newcommand{\pa}{{\partial}}
\newcommand{\pxi}{{\pa \over \pa x^i}}
\title{Generalized P-Reducible $(\alpha, \beta)$-Metrics\\ with Vanishing S-curvature\footnote{Annales
Polonici Mathematici, {\bf 114}(1)  (2015), 67-79.}}
\author{A. Tayebi and H. Sadeghi}
\begin{document}

\maketitle
\begin{abstract}
In this paper, we study one of the  open problems in Finsler geometry  which presented by Matsumoto-Shimada  about the existence  of P-reducible  metric which is not C-reducible. For this aim, we study a class of Finsler metrics called generalized P-reducible metrics that  contains the class of P-reducible metrics. We prove that every  generalized P-reducible $(\alpha, \beta)$-metric with vanishing S-curvature reduces to a Berwald metric or C-reducible metric.  It results that there is not any concrete P-reducible $(\alpha,\beta)$-metric with vanishing S-curvature.\\\\
{\bf {Keywords}}:    P-reducible metric,  C-reducible metric, S-curvature.\footnote{ 2010 Mathematics subject Classification: 53C60, 53C25.}
\end{abstract}

\section{Introduction}
In 1975,  the well-known Physicist Y. Takano  wrote a paper on physics  which considered the  field  equation  in a Finsler  space  and  proposed  certain  geometrical  problems  in  Finsler  geometry \cite{Tak}. He  requested   mathematicians to  find  some  interesting  special  forms  of  hv-curvature from  the  standpoint  of  physics. In 1978,   Matsumoto introduced the  notion of P-reducible Finsler metrics as an answer to Takano which were  a  generalization  of  C-reducible Finsler metrics \cite{M}. For a   Finsler metric of dimension $n\geq 3$, he found some conditions under which the Finsler metric was P-reducible.

Since the study  of  hv-curvature becomes  urgent  necessity  for  the  Finsler  geometry  as  well  as  for  theoretical physics, then  Matsumoto-Shimada study the curvature properties of P-reducible metrics in \cite{MShi}. They  introduced the following open problem:
\begin{center}
\emph{Is there any concrete P-reducible metric which is not C-reducible?}
\end{center}

In \cite{MaHo}, Matsumoto-H\={o}j\={o} proves that  $F$ is C-reducible  if and only if it is a Randers  metric or Kropina metric. These metrics  defined by $F=\alpha+\beta$ and $F=\alpha^2/\beta$, respectively,  where $\alpha=\sqrt{a_{ij}y^iy^j}$ is a Riemann metric and $\beta:=b_i(x) y^i$ a 1-form on a manifold $M$. The Randers metrics were introduced by G. Randers in the context of general relativity  and have been widely applied in many areas of natural science, including biology, ecology,  physics and psychology, etc  \cite{CS}. The Kropina metric was  introduced by L. Berwald  in connection with a two-dimensional Finsler space with rectilinear extremal \cite{Ber}.

In \cite{Numata}, Numata introduced an interesting  family of Finsler metrics called the Numata-type metrics.  The Numata-type Finsler metrics are defined by $F:=\bar{F}+\eta$, where   $\bar{F}(y)=\sqrt{g_{ij}(y)y^iy^j}$ is a locally Minkowskian metric  and $\eta=\eta_i(x)y^i$  a closed one-form on a manifold $M$. $F$ is called a Randers change of ${\bar F}$. By a simple calculation, we get
\[
C_{ijk}={\bar C}_{ijk}+\frac{1}{2\bar{F}}\Big\{h_{ij}D_m+h_{jk}D_i+h_{ki}D_j\Big\},
\]
where $D_i:=\eta_i-\eta y_i/(\bar{F})^2$ and $h_{ij}:=FF_{ij}$ is the angular metric. Define $\eta_{i|j}$ by $\eta_{i|j}\gamma^j:=d\eta_i-\eta_j\gamma^j_i$, where $\gamma^i:=dx^i$ and $\gamma^j_i:=\Gamma^j_{ik}dx^k$ denote
the linear connection form of $\bar{F}$. Put
\[
\mathfrak{D}_{ij}:=\frac{1}{2}(\eta_{i|j}+\eta_{j|i}).
\]
Then the Landsberg curvature of $F$ is given by
\be
L_{ijk}=\lambda C_{ijk}+ a_ih_{jk}+a_jh_{ki}+a_kh_{ij},\label{GP-red}
\ee
where
\[
\lambda=\frac{1}{2F}\mathfrak{D}_{ij}y^iy^j\ \ \ \  \textrm{and}\  \ \ \ a_i:=\frac{1}{2F^2{\bar F}^3}\Big[2F{\bar F}^2\mathfrak{D}_{ik}-2F\mathfrak{D}_{kl}y^ly_i-(1+{\bar F}^2)\mathfrak{D}_{kl}y^lD_j \Big]y^k.
\]

A Finsler metric $F$ is called \emph{generalized P-reducible} if its Landsberg curvature is given by (\ref{GP-red}), where $a_i=a_i(x, y)$ and $\lambda=\lambda(x, y)$ are scalar function on $TM$. Thus every Numata-type metric is a generalized P-reducible metric. By definition, if $a_i=0$  then $F$  reduces to a general relatively  isotropic Landsberg metric and if $\lambda=0$ then $F$ is P-reducible. Thus the study of this class of Finsler spaces will enhance our understanding of the geometric meaning of P-reducible metrics.

The notion of  S-curvature is originally introduced by Shen for the volume comparison theorem \cite{Shvol}.  The Finsler metrics with vanishing  S-curvature are  some of important geometric structures which deserved to be studied deeply. Then it is a natural problem to study Finsler metrics with vanishing S-curvature \cite{TN}.

An $(\alpha, \beta)$-metric is a Finsler metric of the form $F:=\alpha \phi(s)$, $s=\beta/\alpha$, where  $\phi=\phi(s)$  is a $C^\infty$ on $(-b_0, b_0)$, $\alpha=\sqrt{a_{ij}(x)y^iy^j}$ is a Riemannian metric and $\beta =b_i(x)y^i$ is a 1-form on  $M$. For example, $\phi= c_1\sqrt{1+c_2s^2}+c_3s$  is called Randers type metric, where  $c_1>0$, $c_2$ and $c_3$ are constant. In this paper, we  characterize  generalized P-reducible  $(\alpha,\beta)$-metrics with vanishing S-curvature and  prove the following.

\begin{thm}\label{mainthm}
Let $F=\alpha\phi(s)$, $s=\beta/\alpha$, be an  $(\alpha,\beta)$-metric on a manifold $M$. Suppose that $F$  is a generalized P-reducible metric with vanishing S-curvature. Then $F$ is a Berwald metric or  C-reducible metric.
\end{thm}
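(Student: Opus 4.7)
The plan is to combine the algebraic identity supplied by generalized P-reducibility with the differential constraints on $\beta$ provided by vanishing S-curvature, and then to analyse the remaining possibilities through a Cheng--Shen--type dichotomy on $\phi$. First, I would set up the standard decomposition: for an $(\alpha,\beta)$-metric $F=\alpha\phi(s)$, the angular metric $h_{ij}$, the Cartan tensor $C_{ijk}$, and the Landsberg tensor $L_{ijk}$ all admit expressions in terms of $s=\beta/\alpha$ and the $\alpha$-orthogonal vector $m_i:=b_i-s\alpha^{-1}y_i$. In particular, $C_{ijk}$ and $L_{ijk}$ lie in the two-dimensional tensor span generated by $h_{(ij}m_{k)}$ and $m_im_jm_k$, with coefficients built from $\phi,\phi',\phi''$ and, for $L_{ijk}$, from the components of $r_{ij}:=\tfrac12(b_{i|j}+b_{j|i})$ and $s_{ij}:=\tfrac12(b_{i|j}-b_{j|i})$.

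Next, I would substitute these explicit formulas into the generalized P-reducibility identity $L_{ijk}=\lambda C_{ijk}+a_ih_{jk}+a_jh_{ki}+a_kh_{ij}$. The right-hand side contributes $a_ih_{jk}+\mathrm{cyc.}$ only to the $h_{(ij}m_{k)}$ slot, forcing $a_i$ (modulo an irrelevant $y_i$ component) to be proportional to $m_i$ and producing two independent scalar equations obtained by matching the coefficients of $h_{(ij}m_{k)}$ and of $m_im_jm_k$. Clearing denominators in $\alpha$ and expanding in powers of $\beta$ turns these into polynomial identities in $y^i$ whose coefficients must vanish separately.

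At this stage I would invoke the Cheng--Shen characterization of $(\alpha,\beta)$-metrics with vanishing S-curvature, which constrains $r_{ij}$ and $s_{ij}$ according to the type of $\phi$. Outside of the Randers and Kropina families, feeding this constraint into the two coefficient relations from the previous step should force $b_{i|j}=0$, in which case the Chern connection of $F$ reduces to the Levi--Civita connection of $\alpha$ and $F$ becomes Berwald. In the remaining Randers or Kropina subcases the Matsumoto--H\={o}j\={o} theorem directly gives C-reducibility, so the dichotomy asserted in Theorem~\ref{mainthm} is established.

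The main obstacle will be the polynomial elimination at the interface of the last two steps. The Landsberg tensor of a general $(\alpha,\beta)$-metric depends on $r_{ij}$ and $s_{ij}$ through a mixture of $r_{00}$, $r_i{}^0$, $s_{i0}$ and $s_0$, each weighted by a different rational function of $s$. Extracting enough independent monomial identities, in the irrational quantity $\alpha$ and the polynomial quantity $\beta$, to conclude $b_{i|j}=0$ outside the Randers/Kropina locus requires careful bookkeeping; once this is executed, the Berwald/C-reducible dichotomy follows immediately.
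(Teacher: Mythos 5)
Your proposal has the right overall flavor---play the algebraic identity off against the Cheng--Shen constraints on $r_{ij}$ and $s_{ij}$ and reduce to a dichotomy on $\phi$---but it rests on a structural claim that is false, and it skips the step that carries most of the weight of the theorem. The claim that $L_{ijk}$ lies in the two-dimensional span of $h_{(ij}m_{k)}$ and $m_im_jm_k$ is wrong: the Landsberg tensor of an $(\alpha,\beta)$-metric contains terms in $s_{i0}$ and $r_{i0}$ (through the vector $D_i=\alpha^2(s_{i0}+\Gamma r_{i0}+\Pi\alpha s_i)-(\Gamma r_{00}+\Pi\alpha s_0)\bar y_i$), and $s_{i0}$ is in general transverse to both $b_i$ and $y_i$. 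Even after imposing $r_{ij}=0$, $s_j=0$ from vanishing S-curvature, one is left with $L_{ijk}=V_{ij}s_{k0}+V_{jk}s_{i0}+V_{ki}s_{j0}$, which does not live in your claimed span unless $s_{i0}=0$. Consequently ``matching coefficients of $h_{(ij}m_{k)}$ and $m_im_jm_k$'' is not an available move; the paper instead extracts its scalar equations by contracting the identity with $s^i_0s^j_0s^k_0$ and with $b^ib^j$ separately.

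The larger omission is the treatment of the unknowns $\lambda$ and $a_i$. Your two matched coefficient equations would contain $\lambda$ and the $m$-component of $a_i$ as free functions, so they impose no constraint until these are eliminated. The paper does this by tracing the defining identity to get $a_i=\frac{1}{n+1}J_i-\frac{\lambda}{n+1}I_i$, rewriting generalized P-reducibility as $M_{ijk|s}y^s=\lambda M_{ijk}$ for the Matsumoto torsion, proving that vanishing S-curvature forces $b^ib^jb^kL_{ijk}=0$ and $b^iJ_i=0$, and then contracting with $b^ib^jb^k$ to obtain $\lambda\cdot(\cdots)=0$. The case $\lambda\neq0$ must then be disposed of using semi-C-reducibility of $(\alpha,\beta)$-metrics together with a Cauchy--Schwarz argument; only after that does one reach the P-reducible situation where an ODE analysis of $\phi$ takes over. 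Finally, your last-stage dichotomy is inverted: it is not that non-Randers/Kropina $\phi$ forces $b_{i|j}=0$, but rather that $s_{i0}\neq0$ forces $\phi$ into the Randers-type family (excluded) or a Randers change of a Kropina metric (C-reducible), while the Berwald alternative comes from $s_{i0}=0$ combined with $r_{ij}=0$, not from any property of $\phi$.
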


By Theorem \ref{mainthm}, it follows that there is no concrete P-reducible $(\alpha,\beta)$-metric with vanishing S-curvature (see Lemma \ref{lem5}).

\bigskip

In this paper, we use the Berwald connection  and the $h$- and $v$- covariant derivatives of a Finsler tensor field are denoted by `` $|$ " and ``, " respectively.

\section{Preliminaries}\label{sectionP}
Let $(M, F)$ be a Finsler manifold. Suppose that  $x\in M$ and $F_x:=F|_{T_xM}$. We define ${\bf C}_y:T_xM\otimes T_xM\otimes
T_xM\rightarrow \mathbb{R}$ by ${\bf C}_y(u,v,w):=C_{ijk}(y)u^iv^jw^k$ where
\[
C_{ijk}:=\frac{1}{2}\frac{\partial g_{ij}}{\partial y^k}=\frac{1}{4}\frac{\partial^3 F^2}{\partial y^i\partial y^j\partial y^k},
\]
and $g_{ij}:=\frac{1}{2}[F^2]_{y^iy^j}$. The family ${\bf C}:=\{{\bf C}_y\}_{y\in TM_0}$  is called the Cartan torsion. It is well known that ${\bf{C}}=0$ if and only if $F$ is Riemannian. For $y\in T_x M_0$, define  mean Cartan torsion ${\bf I}_y$ by ${\bf I}_y(u):=I_i(y)u^i$, where $I_i:=g^{jk}C_{ijk}$.

For   $y \in T_xM_0$, define the  Matsumoto torsion ${\bf M}_y:T_xM\otimes T_xM \otimes T_xM \rightarrow \mathbb{R}$ by ${\bf M}_y(u,v,w):=M_{ijk}(y)u^iv^jw^k$ where
\[
M_{ijk}:=C_{ijk} - {1\over n+1}  \big\{ I_i h_{jk} + I_j h_{ik} + I_k h_{ij}\big \},\label{Matsumoto}
\]
and $h_{ij}=g_{ij}-F_{y^i}F_{y^j}$ is the angular metric.  $F$ is said to be C-reducible if ${\bf M}_y=0$.
\begin{lem}\label{MaHo}{\rm (\cite{MaHo})}
\emph{A  Finsler metric $F$ on a manifold $M$ of dimension $n\geq 3$ is a Randers  metric or Kropina metric if and only if\  ${\bf M}_y =0$, $\forall y\in TM_0$.}
\end{lem}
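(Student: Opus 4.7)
The plan is to prove the two directions separately, with the forward implication (Randers or Kropina $\Rightarrow {\bf M}_y=0$) handled by direct computation and the converse (${\bf M}_y=0 \Rightarrow$ Randers or Kropina) by structural analysis of the tensor identity defining $M_{ijk}$.

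For the forward direction, take $F = \alpha + \beta$. Differentiating $F^2 = \alpha^2 + 2\alpha\beta + \beta^2$ twice in $y$ yields $g_{ij}$, and a third $y$-derivative gives $C_{ijk}$. The crucial computational observation is that $C_{ijk}$ for a Randers metric comes out as the complete symmetrization of a rank-one tensor built from a single 1-form $p_i := b_i - (\beta/\alpha^2)y_i$ paired against the angular metric $h_{ij}$; concretely $C_{ijk}$ takes the shape $c(x,y)\{p_i h_{jk} + p_j h_{ik} + p_k h_{ij}\}$ for some scalar factor $c$. Taking the $g^{jk}$-trace then yields $I_i$ proportional to $p_i$, and substituting back into the definition of $M_{ijk}$ produces an exact cancellation. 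The Kropina case $F = \alpha^2/\beta$ admits an entirely analogous calculation, so ${\bf M}_y=0$ in both cases.

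For the converse direction, assume $M_{ijk}=0$, so that
\[
(n+1)\,C_{ijk} = I_i h_{jk} + I_j h_{ik} + I_k h_{ij}.
\]
The task is to reconstruct the functional form of $F$ from this algebraic constraint on its third $y$-derivatives. Following the approach of Matsumoto--H\={o}j\={o}, the strategy has three steps. First, exploit the maximal symmetry of the right-hand side to deduce that $I_i$, viewed on each $T_xM_0$, has a very rigid $y$-dependence: it is generated from a point-dependent covector $b_i(x)$ together with the distinguished covector $y_i/F$. Second, use the so-constructed $(b_i,\beta:=b_iy^i)$ to define a candidate Riemannian data $a_{ij}$ by subtracting from $F^2$ a suitable polynomial in $\beta$, and verify by $y$-differentiation that the resulting $a_{ij}$ is indeed $y$-independent. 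Third, integrate the ODE that $F$ now satisfies along each ray of $T_xM$; the positive homogeneity $F(\lambda y)=\lambda F(y)$ combined with the algebraic form of the identity singles out exactly two admissible branches of solutions, $F = \alpha + \beta$ and $F = \alpha^2/\beta$.

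The main obstacle is the reconstruction step in the converse direction: translating a purely pointwise algebraic identity on third $y$-derivatives into a global structural form for $F$. The hypothesis $n \geq 3$ is essential here because in dimension two the angular metric has rank one, and the defining identity of $M_{ijk}$ holds identically for every Finsler metric; only in higher dimension does the angular metric carry enough independent components to force the 1-form structure on $I_i$. Disentangling the ODE that expresses $F^2$ as a quadratic polynomial in $\alpha$ and $\beta$ and isolating precisely the two positive, homogeneous solutions is the technical heart of the proof.
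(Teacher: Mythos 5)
First, a remark on the comparison itself: the paper does not prove this lemma at all. It is stated as Lemma \ref{MaHo} and imported as a black box from Matsumoto--H\={o}j\={o} \cite{MaHo}, so there is no in-paper argument to measure your proposal against; your text has to stand on its own as a proof of the Matsumoto--H\={o}j\={o} ``conclusive theorem.'' Your forward direction is acceptable as a sketch: for $F=\alpha+\beta$ the Cartan tensor is indeed the symmetrization of the angular metric against the $1$-form $p_i=b_i-(\beta/\alpha^2)\bar{y}_i$, and since $p_iy^i=0$ your trace argument correctly gives $I_i=c(n+1)p_i$ and hence exact cancellation in $M_{ijk}$; the Kropina computation is likewise classical. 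These are verifiable computations, merely asserted rather than carried out, which is tolerable for this direction.

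The genuine gap is the converse, which is the entire content of the theorem. Your three-step plan --- (i) $M_{ijk}=0$ forces $I_i$ to be generated by a $y$-independent covector $b_i(x)$ together with $y_i/F$; (ii) one can then peel a polynomial in $\beta$ off $F^2$ to produce a $y$-independent quadratic form $a_{ij}$; (iii) homogeneity plus an ODE leaves exactly the branches $\alpha+\beta$ and $\alpha^2/\beta$ --- is a table of contents, not a proof. Step (i) is precisely where the work lies: a priori $I_i(x,y)$ is an arbitrary covector field on $T_xM_0$, homogeneous of degree $-1$ and annihilating $y$, and the hypothesis
\begin{equation*}
(n+1)\,C_{ijk}=I_ih_{jk}+I_jh_{ik}+I_kh_{ij},\qquad C_{ijk}=\tfrac{1}{4}[F^2]_{y^iy^jy^k},
\end{equation*}
is a third-order PDE in $y$ whose analysis (vertical differentiation, repeated contraction, control of the resulting scalars) occupies most of the original Tensor N.S. paper; nothing in your text indicates how the point-dependent $b_i(x)$ is extracted from it. You flag this yourself as ``the main obstacle'' and ``the technical heart,'' which is an accurate self-assessment: the heart is missing, and without (i) steps (ii) and (iii) have no input --- you cannot say which polynomial in $\beta$ to subtract, nor which ODE to integrate. (Your side remark explaining the hypothesis $n\geq 3$ --- that in dimension two $h_{ij}$ has rank one and $M_{ijk}$ vanishes identically for every metric --- is correct, but it does not advance the converse.) As it stands, the proposal proves the easy implication and only promises the hard one.
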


A Finsler metric called semi-C-reducible if its Cartan tensor is given by following
\begin{eqnarray}
C_{ijk}=\frac{p}{n+1}\big\{h_{ij}I_k+h_{jk}I_i+h_{ik}I_j\big\}+\frac{q}{\|\textbf{I}\|^2}I_iI_jI_k,\label{m1}
\end{eqnarray}
where $p=p(x,y)$ and $q=q(x,y)$ are scalar function on $TM$ satisfying $p+q=1$ and $\|\textbf{I}\|^2=I^mI_m$ (see \cite{Mat}\cite{TPN}\cite{TS}).
\begin{lem}{\rm (\cite{Mat})}
\emph{Every non-Riemannian $(\alpha,\beta)$-metric on a manifold $M$ of dimension $n\geq 3$ is semi-C-reducible.}
\end{lem}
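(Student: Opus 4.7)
The plan is to compute the Cartan tensor of $F=\alpha\phi(s)$ directly and recognize that it automatically collapses to the two-term form in (\ref{m1}). The starting point is the standard expression for the fundamental tensor of an $(\alpha,\beta)$-metric,
\[
g_{ij}=A\,a_{ij}+B\,b_ib_j+C\,(b_iy_j+b_jy_i)+D\,y_iy_j,
\]
where $y_i:=a_{ij}y^j$ and $A,B,C,D$ are explicit functions of $s=\beta/\alpha$ (and $\alpha$) built from $\phi,\phi',\phi''$. Differentiating in $y^k$, and using that $\partial s/\partial y^k$ and $\partial\alpha/\partial y^k$ are themselves linear combinations of $b_k$ and $y_k/\alpha$, one obtains---after reorganizing via the angular metric $h_{ij}=g_{ij}-F_{y^i}F_{y^j}$---a representation
\[
C_{ijk}=\tau_1\bigl(h_{ij}m_k+h_{jk}m_i+h_{ki}m_j\bigr)+\tau_2\,m_im_jm_k,
\]
where $m_i:=b_i-(\beta/\alpha^2)y_i$ is the $a$-orthogonal projection of $b_i$ away from $y$, and $\tau_1,\tau_2$ are explicit scalar functions of $s$.

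Next, I would compute the mean Cartan torsion $I_i=g^{jk}C_{ijk}$. Since $m_iy^i=\beta-\beta=0$, one checks that $h_i{}^jm_j=m_i$, so tracing the previous display yields
\[
I_i=\bigl[(n+1)\tau_1+\tau_2\|m\|^2\bigr]m_i=:\sigma\,m_i,\qquad \|m\|^2:=g^{ij}m_im_j.
\]
The non-Riemannian hypothesis ensures $C_{ijk}\not\equiv 0$, hence $\sigma\neq 0$, so $m_i=I_i/\sigma$ and $\|\mathbf{I}\|^2=\sigma^2\|m\|^2$. Substituting $m_i=I_i/\sigma$ back into the formula for $C_{ijk}$ rewrites it in exactly the shape of (\ref{m1}) with
\[
\frac{p}{n+1}:=\frac{\tau_1}{\sigma},\qquad \frac{q}{\|\mathbf{I}\|^2}:=\frac{\tau_2}{\sigma^3}.
\]
Unwinding, $p=(n+1)\tau_1/\sigma$ and $q=\tau_2\|m\|^2/\sigma$, so $p+q=[(n+1)\tau_1+\tau_2\|m\|^2]/\sigma=\sigma/\sigma=1$, as required.

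The main obstacle is purely computational: verifying that the many terms produced by differentiating $A,B,C,D$ combine cleanly into the stated symmetric three-tensor in $h_{ij}$ and $m_i$, with no residual pieces surviving. The conceptual reason this succeeds is essentially representation-theoretic. The $(\alpha,\beta)$-structure provides only two tensorial ingredients at a point, namely the Riemannian metric $a_{ij}$ and the covector $b_i$; modulo the distinguished direction $y$ (which the Cartan tensor annihilates), the only surviving geometric data are the angular metric $h_{ij}$ and the single covector $m_i$. Every totally symmetric rank-three tensor built from $h_{ij}$ and $m_i$ is automatically a linear combination of $h_{ij}m_k+h_{jk}m_i+h_{ki}m_j$ and $m_im_jm_k$---precisely the two terms appearing in (\ref{m1}).
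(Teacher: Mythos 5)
The paper never proves this lemma---it is quoted from Matsumoto \cite{Mat}---so your argument can only be measured against the classical computation, and that is essentially what you have reconstructed. The structural core of your proposal is sound, and I can confirm it goes through: $2C_{ijk}=\partial g_{ij}/\partial y^k$ is a totally symmetric tensor built polynomially from $a_{ij}$, $b_i$, $\alpha_i:=\bar y_i/\alpha$ with coefficients depending on $s$, it satisfies $C_{ijk}y^k=0$, and these constraints do force the two-term shape you assert. Concretely, differentiating the paper's expression for $g_{ij}$ and using the identities $\rho'=\rho_1$, $\rho_1'=-s\rho_0'$, $\rho_2=-s\rho_1$ (prime $=d/ds$), all terms containing $\alpha_i$ cancel and one gets
\[
2\alpha\,C_{ijk}=\rho_1\bigl(h^\alpha_{ij}m_k+h^\alpha_{jk}m_i+h^\alpha_{ki}m_j\bigr)+\rho_0'\,m_im_jm_k,
\qquad h^\alpha_{ij}:=a_{ij}-\alpha_i\alpha_j,\ \ m_i:=b_i-s\alpha_i,
\]
which converts to your $(h_{ij},m_i)$-form via $h_{ij}=\phi(\phi-s\phi')h^\alpha_{ij}+\phi\phi''m_im_j$. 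Your trace bookkeeping ($h_i{}^jm_j=m_i$, hence $I_i=\sigma m_i$ with $\sigma=(n+1)\tau_1+\tau_2\|m\|^2$) and the verification that $p+q=1$ are both correct.

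The one genuine gap is the sentence ``the non-Riemannian hypothesis ensures $C_{ijk}\not\equiv 0$, hence $\sigma\neq 0$.'' That inference is a non sequitur: at a point where $m_i\neq 0$, nonvanishing of $C_{ijk}$ only gives $(\tau_1,\tau_2)\neq(0,0)$, and the particular combination $\sigma=(n+1)\tau_1+\tau_2\|m\|^2$ could a priori still vanish there; at such a point $\mathbf{I}=0$, the right-hand side of (\ref{m1}) is meaningless (it divides by $\|\mathbf{I}\|^2$), and your substitution $m_i=I_i/\sigma$ breaks down. To close this you need an input beyond the bare fact $C\neq 0$: either Deicke's theorem (a positive-definite Finsler metric is Riemannian if and only if $\mathbf{I}=0$), or, more in the spirit of the $(\alpha,\beta)$-machinery, the explicit formula $I_i=-\frac{\Phi(\phi-s\phi')}{2\Delta\phi\alpha}\,m_i$ (this is (\ref{08}) in the paper), which identifies $\sigma$ as a nonzero multiple of $\Phi$, together with the Cheng--Shen fact that $\Phi\equiv 0$ if and only if $F$ is Riemannian \cite{ChSh3}. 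Even with this repair, $\Phi$ can still vanish at isolated values of $s$ for a non-Riemannian metric, so the identity (\ref{m1}) must be read on the set where $\mathbf{I}\neq 0$---a degeneracy implicit in the statement of semi-C-reducibility itself, not a defect peculiar to your argument. In short: right approach, correct algebra, but the nondegeneracy of $\sigma$ has to come from $\Phi$ (or Deicke), not from $C\not\equiv 0$.
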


The horizontal covariant derivatives of the Cartan torsion ${\bf C}$ and mean Cartan torsion ${\bf I}$ along geodesics give rise to  the  Landsberg curvature  ${\bf L}_y:T_xM\otimes T_xM\otimes T_xM\rightarrow \mathbb{R}$  and mean Landsberg curvature   ${\bf J}_y:T_xM\rightarrow \mathbb{R}$ which are defined by ${\bf L}_y(u,v,w):=L_{ijk}(y)u^iv^jw^k$ and ${\bf J}_y(u):=J_i(y)u^i$, respectively,  where
\[
L_{ijk}:=C_{ijk|s}y^s, \ \ \ \ \  J_i:=I_{i|s}y^s.
\]
The families  ${\bf L}:=\{{\bf L}_y\}_{y\in TM_{0}}$  and ${\bf J}:=\{{\bf J}_y\}_{y\in TM_{0}}$ are called the Landsberg curvature and mean Landsberg curvature, rspectively. A Finsler metric is called a Landsberg metric and weakly Landsberg metric  if ${\bf{L}}=0$ and ${\bf J}=0$, respectively.

A Finsler metric $F$ on $n$-dimensional manifold $M$ is called P-reducible if its  Landsberg curvature is given by following
\[
L_{ijk}={1\over n+1}  \big\{ J_i h_{jk} + J_j h_{ik} + J_k h_{ij}\big \}.
\]
It is easy to see that, every C-reducible metric is P-reducible. But the converse maybe is not true \cite{M3}.

\bigskip

Given an $n$-dimensional Finsler manifold $(M,F)$, then a global vector field ${\bf G}$ is induced by $F$ on $TM_0$, which in a standard coordinate $(x^i,y^i)$ for $TM_0$ is given by ${\bf G}=y^i {{\partial} \over {\partial x^i}}-2G^i(x,y){{\partial} \over {\partial y^i}}$, where $G^i=G^i(x, y)$ are called spray coefficients and given by following
\[
G^i:=\frac{1}{4}g^{il}\Big[\frac{\partial^2 F^2}{\partial x^k\partial y^l}y^k-\frac{\partial F^2}{\partial x^l}\Big],\ \ \ \  y\in T_xM.
\]
${\bf G}$ is called the  spray associated  to $F$.

For  $y \in T_xM_0$, define ${\bf B}_y:T_xM\otimes T_xM \otimes T_xM\rightarrow T_xM$  by ${\bf B}_y(u, v, w):=B^i_{\ jkl}(y)u^jv^kw^l{{\partial } \over {\partial x^i}}|_x$, where
\[
B^i_{\ jkl}:={{\partial^3 G^i} \over {\partial y^j \partial y^k \partial y^l}}.
\]
$\bf B$ is called the Berwald curvature and $F$ is called a Berwald metric if $\bf{B}=0$.

\bigskip

For an $(\alpha, \beta)$-metric, let us define $b_{i|j}$ by $b_{i|j}\theta^j:=db_i-b_j\theta^j_i$, where $\theta^i:=dx^i$ and $\theta^j_i:=\Gamma^j_{ik}dx^k$ denote
the Levi-Civita connection form of $\alpha$. Let
\begin{eqnarray*}
&&r_{ij}:=\frac{1}{2}(b_{i|j}+b_{j|i}), \ \ \ s_{ij}:=\frac{1}{2}(b_{i|j}-b_{j|i}),\\
&&r_{i0}: = r_{ij}y^j, \  \ r_{00}:=r_{ij}y^iy^j, \ \ r_j := b^i r_{ij},\\
&&s_{i0}:= s_{ij}y^j, \  \ \ s_j:=b^i s_{ij}, \ \ \  r_0:= r_j y^j,\ \  \ \  s_0 := s_j y^j.
\end{eqnarray*}

Let $G^i=G^i(x,y)$ and $G^i_{\alpha}=G^i_{\alpha}(x,y)$ denote the
coefficients  of $F$ and  $\alpha$ respectively in the same coordinate system. The following holds
\begin{eqnarray}
G^i=G^i_{\alpha}+\alpha Q s^i_0+(-2Q\alpha s_0+r_{00})(\Theta\frac{y^i}{\alpha}+\Psi b^i),\label{G1}
\end{eqnarray}
where
\begin{eqnarray*}
&&Q:=\frac{\phi'}{\phi-s\phi'}, \ \ \ \Delta:=1+sQ+(b^2-s^2)Q',\\
&&\Theta:=\frac{Q-sQ'}{2\Delta}, \ \ \ \  \Psi:=\frac{Q'}{2\Delta}.
\end{eqnarray*}

The mean Landsberg curvature of an $(\alpha,\beta)$-metric $F =\alpha\phi(s)$, is given by following formula
\begin{eqnarray}
\nonumber J_i:=\!\!\!\!&-&\!\!\!\!\frac{1}{2\alpha^4\Delta}\Bigg(\frac{2\alpha^2}{b^2-s^2}\Big[\frac{\Phi}{\Delta}+(n+1)(Q-sQ')\Big](r_0+s_0)h_i
\\ \nonumber \!\!\!\!&+&\!\!\!\! \frac{\alpha}{b^2-s^2}\Big[\Psi_1+s\frac{\Phi}{\Delta}\Big](r_{00}-2\alpha Q s_0)h_i+\alpha\Big[-\alpha Q's_0h_i+\alpha Q(\alpha^2 s_i-\bar{y}_is_0)\\
\!\!\!\!&+&\!\!\!\! \alpha^2\Delta s_{i0}+\alpha^2(r_{i0}-2\alpha Q s_0)-(r_{00}-2\alpha Q s_0)\bar{y}_i\Big]\frac{\Phi}{\Delta}\Bigg),\label{meanlanndsberg}
\end{eqnarray}
where
\begin{eqnarray*}
&&\Psi_1:=\sqrt{b^2-s^2}\Delta^{\frac{1}{2}}\Big[\frac{\sqrt{b^2-s^2}}{\Delta^{\frac{3}{2}}}\Big]',\\
&&h_i:=\alpha b_i-s \bar{y}_i, \ \ \  \bar{y}_i:=a_{ij}y^j,\\
&&\Phi:=-(Q-sQ')(n\Delta+1+sQ)-(b^2-s^2)(1+sQ)Q''.
\end{eqnarray*}
For more details, see \cite{Cheng}. Then we have
\begin{eqnarray}
\bar{J}:=b^iJ_i=-\frac{1}{2\alpha^2\Delta}\Big\{\Psi_1(r_{00}-2\alpha Q s_0)+\alpha\Psi_2(r_0+s_0)\Big\},\label{05}
\end{eqnarray}
where
\[
\Psi_2:=2(n+1)(Q-sQ')+3\frac{\Phi}{\Delta}.
\]

\bigskip

For a Finsler metric $F$ on an $n$-dimensional manifold $M$, the
Busemann-Hausdorff volume form $dV_F = \sigma_F(x) dx^1 \cdots
dx^n$ is defined by
\[
\sigma_F(x) := {{\rm Vol} (\Bbb B^n(1))
\over {\rm Vol} \Big \{ (y^i)\in \mathbb{R}^n \ \big | \ F \big ( y^i
\pxi|_x \big ) < 1 \Big \} }.
\]
Let $G^i(x, y)$ denote the geodesic coefficients of $F$ in the same
local coordinate system. The S-curvature is defined by
\[
 {\bf S}({\bf y}) := {\pa G^i\over \pa y^i}(x,y) - y^i {\pa \over \pa x^i}
\Big [ \ln \sigma_F (x)\Big ],
\]
where ${\bf y}=y^i\pxi|_x\in T_xM$. If $F$ is a Berwald metric then ${\bf S}=0$.

\bigskip

In \cite{ChSh3}, Cheng-Shen  characterize $(\alpha,\beta)$-metrics with isotropic
S-curvature.
\begin{lem}\label{CS0}{\rm (\cite{ChSh3})}
\emph{Let $F=\alpha\phi(s)$ , $s=\beta/\alpha$, be an non-Riemannian $(\alpha,\beta)$-metric on a manifold $M$ of dimension $n\geq 3$ and $b:=\|\beta_x\|_{\alpha}$. Suppose that $F$ is not a Finsler metric of Randers type. Then $F$ is of isotropic S-curvature, $\textbf{S}=(n+1)cF$, if and
only if
one of the following holds\\
(a) $\beta$ satisfies
\begin{eqnarray}
r_{ij}=\varepsilon(b^2a_{ij}-b_ib_j), \ \ \ s_j=0,\label{45}
\end{eqnarray}
where $\varepsilon=\varepsilon(x)$ is a scalar function, and
$\phi=\phi(s)$ satisfies
\begin{eqnarray}
\Phi=-2(n+1)k\frac{\phi\Delta^2}{b^2-s^2},
\end{eqnarray}
where $k$ is a constant. In this case, $\textbf{S}=(n+1)cF$ with
$c=k\varepsilon$.\\
(b) $\beta$ satisfies
\begin{eqnarray}
r_{ij}=0,\hspace{.5cm}s_j=0.\label{44}
\end{eqnarray}
In this case, $\textbf{S}=0$.}
\end{lem}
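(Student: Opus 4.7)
The plan is to derive the characterization directly from the explicit formula for the S-curvature of an $(\alpha,\beta)$-metric. Starting from the spray formula (\ref{G1}), one computes $\partial G^i/\partial y^i$ and subtracts $y^i \partial_{x^i}\ln\sigma_F$; after bookkeeping that uses the fact that the Busemann--Hausdorff distortion of an $(\alpha,\beta)$-metric depends on $x$ only through $b=\|\beta_x\|_\alpha$, one obtains $\textbf{S}$ as a linear combination of $r_{00}$, $\alpha Q s_0$, $r_0$, $s_0$ with coefficients that are rational functions of $s=\beta/\alpha$ built from $Q, \Phi, \Delta, \Psi$. The target identity $\textbf{S}=(n+1)c\,\alpha\phi(s)$, after clearing the denominators $\Delta$ and $\alpha$, becomes a single polynomial identity in the $y^i$.

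First, I would exploit the irrationality of $\alpha$ in $y$: the quantities $\beta, r_{00}, r_0, s_0$ are polynomials in $y$ while $\alpha=\sqrt{a_{ij}y^iy^j}$ is a square root. The identity therefore decouples into two independent equations, one rational in $y$ and one $\alpha$ times a rational expression; each must vanish separately. The $\alpha$-multiple part gathers the $s_0$-contributions, and matching coefficients in the scalar functions of $s$ (which are generically independent precisely when $\phi$ is not of Randers type) forces $s_j=0$. This is the unique place where the non-Randers hypothesis enters: for Randers type $\phi$ the functions $Q-sQ'$ and $\Phi/\Delta^2$ satisfy degenerate identities that would admit $s_j\neq 0$.

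Second, with $s_j=0$, the remaining rational equation takes the schematic form $\Phi(s)\, r_{00} = H(s)\,\alpha^2\,\phi(s)\Delta(s)^2$ in which the right-hand side depends on $y$ only through $s=\beta/\alpha$. Since the left-hand side is linear in the $x$-tensor $r_{ij}$, the ratio $r_{00}/(b^2\alpha^2-\beta^2)$ must be independent of $y$; this forces $r_{ij}=\varepsilon(x)(b^2 a_{ij}-b_i b_j)$ for some scalar $\varepsilon$. Substituting back and matching the $s$-dependence yields the ODE $\Phi=-2(n+1)k\,\phi\Delta^2/(b^2-s^2)$ with a separation constant $k$, and $c=k\varepsilon$. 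The degenerate subcase $\varepsilon\equiv 0$ gives $r_{ij}=0$ and $\textbf{S}=0$, which is case (b).

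The converse is a direct substitution into the S-curvature formula for each of the two sets of conditions, confirming $\textbf{S}=(n+1)cF$ with $c=k\varepsilon$ in case (a) and $c=0$ in case (b). The main obstacle is the first step: obtaining a workable closed form for $\textbf{S}$ and justifying the rational/irrational split in $\alpha$ with correct power-counting so that the coefficient extraction argument really does isolate $s_j=0$. Once that split is in hand, the remaining algebra for case (a) versus case (b) is essentially formal, and this is the calculation carried out by Cheng--Shen in \cite{ChSh3}.
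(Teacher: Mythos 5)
This lemma is not proved in the paper at all: it is quoted, with attribution, from Cheng--Shen \cite{ChSh3}, so your proposal can only be judged against the argument in that reference. At the level of a table of contents your sketch does match it: one computes $\mathbf{S}$ explicitly from (\ref{G1}), using the fact that the Busemann--Hausdorff volume form of an $(\alpha,\beta)$-metric satisfies $dV_F=f(b)\,dV_\alpha$, so that $\mathbf{S}$ is a combination of $r_{00}$, $r_0$, $s_0$, $\alpha Q s_0$ with coefficients that are functions of $s$; one then shows $s_j=0$, that $r_{00}$ is proportional to $b^2\alpha^2-\beta^2$, and that $\phi$ must satisfy the stated ODE unless the proportionality factor $\varepsilon$ vanishes, which is case (b).

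The gap lies in the one concrete mechanism you commit to, and it is fatal as stated. You claim that after clearing the denominators $\alpha$ and $\Delta$, the equation $\mathbf{S}=(n+1)cF$ ``becomes a single polynomial identity in the $y^i$,'' which then decouples into a rational part plus $\alpha$ times a rational part because $\alpha$ is irrational in $y$. That splitting device is legitimate only when every coefficient is a polynomial (or rational function) of $y$. Here $\phi$ is an arbitrary $C^\infty$ function, so $Q$, $\Delta$, $\Phi$, $\Psi$, $\phi$ are in general transcendental functions of $s=\beta/\alpha$, and no clearing of denominators turns $\Phi(s)\,r_{00}$ or $c\,\alpha^2\phi(s)\Delta(s)^2$ into polynomials in $y$; the ``rational $+$ $\alpha\cdot$rational'' dichotomy has nothing to act on. (Ironically, the $\phi$ for which your mechanism would literally apply are essentially the Randers-type ones that the lemma excludes.) What \cite{ChSh3} actually does is harder: fix $x$, choose coordinates with $b_i=b\delta_{i1}$, observe that on each slice $s=\mathrm{const.}$ of the $\alpha$-unit sphere the identity is polynomial in the transverse variables $(y^2,\dots,y^n)$, extract from these slices a system of equations in the single variable $s$ for the components of $r_{ij}$ and $s_{ij}$, and then analyze that system, using the non-Randers hypothesis to rule out the degenerate solutions of the resulting ODEs. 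That linear-independence/ODE analysis is precisely what you wave at with ``generically independent'' and ``essentially formal,'' and it is where the whole proof lives. A smaller error of the same kind: your schematic form $\Phi\,r_{00}=H(s)\,\alpha^2\phi\Delta^2$ silently discards the $r_0$-term, which is not known to vanish at that stage; $r_0=0$ only follows \emph{after} $r_{ij}=\varepsilon(b^2a_{ij}-b_ib_j)$ has been established, since then $r_j=b^ir_{ij}=\varepsilon(b^2b_j-b^2b_j)=0$.
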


\section{Proof of Theorem \ref{mainthm}}
\begin{lem}
Let $F=\alpha\phi(s)$, $s=\beta/\alpha$, be a non-Randers type $(\alpha,\beta)$-metric on a manifold $M$ of dimension $n\geq 3$. Suppose that $F$ has vanishing S-curvature. Then the following hold
\begin{eqnarray}
&&y_is^i_0=0,\label{L1}\\
&&y_is^i_{0|0}=0,\label{L2}\\
&&y_ib^js^i_{j|0}=\phi(\phi-s\phi')s^j_0s_{j0},\label{L3}
\end{eqnarray}
where $y_i:=g_{ij}y^j$.
\end{lem}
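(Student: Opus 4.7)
The plan is to invoke Lemma~\ref{CS0}: since $\mathbf{S}=0$ is the $c=0$ case of isotropic S-curvature and $F$ is non-Randers, one of the alternatives (a) or (b) holds, and both share the crucial consequence $s_j=0$, so that $b^i s_{ij}=0$ and $s_0 := s_j y^j = 0$. These identities will be used throughout. The algebraic engine is the explicit decomposition
\[
y_i = \phi(\phi - s\phi')\,\bar{y}_i + \alpha\phi\phi'\, b_i, \qquad \bar{y}_i := a_{ij} y^j,
\]
obtained from $y_i = \tfrac{1}{2}(F^2)_{y^i}$ applied to $F^2 = \alpha^2\phi^2(s)$.

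For (\ref{L1}), substituting this formula yields $y_i s^i_0 = \phi(\phi - s\phi')\,\bar{y}_i s^i_0 + \alpha\phi\phi'\, b_i s^i_0$. The first summand equals $\phi(\phi - s\phi')\, s_{ij}y^iy^j = 0$ by skew-symmetry of $s_{ij}$, while the second is $\alpha\phi\phi'\, s_0 = 0$.

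For (\ref{L2}), the plan is to establish $y_{i|k} = 0$ for the Berwald $h$-derivative and then differentiate (\ref{L1}) horizontally. From $F_{|k} = 0$, equivalently $\partial_k F = N^l_k F_{y^l}$, differentiating in $y^i$ and using the symmetry $G^l_{ik} = G^l_{ki}$ of the Berwald coefficients gives $(F_{y^i})_{|k} = 0$; hence $y_{i|k} = (F F_{y^i})_{|k} = 0$. Taking $|0$ of (\ref{L1}) and dropping the $y_{i|0}\, s^i_0$ term then yields (\ref{L2}).

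For (\ref{L3}), I would first record the algebraic identities $b^j s^i_j = 0$ (since $b^j s_{kj} = s_k = 0$) and, by the same substitution as in (\ref{L1}) combined with $y^k s_{kj} = -s_{j0}$ and $b^k s_{kj} = 0$, $y_i s^i_j = -\phi(\phi - s\phi')\, s_{j0}$. Differentiating the scalar identity $y_i b^j s^i_j = 0$ along $y$ and invoking $y_{i|0} = 0$ reduces (\ref{L3}) to showing $b^j_{|0}\, s_{j0} = s^j_0\, s_{j0}$. Writing $b^j_{|0} = r^j_0 + s^j_0$ ($\alpha$-Levi--Civita covariant derivative of the background vector $b^j$), it suffices to verify $r^j_0\, s_{j0} = 0$: in case (b) this is trivial since $r_{ij} = 0$, and in case (a) the form $r^j_0 = \varepsilon(b^2 y^j - b^j\beta)$ together with $y^j s_{j0} = s_{jk}y^jy^k = 0$ and $b^j s_{j0} = s_0 = 0$ makes this vanish. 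The main technical nuisance is the bookkeeping: ``$|$" must be read as the Berwald $h$-derivative when applied to the Finsler object $y_i$ (so that $y_{i|0} = 0$), but as the Levi--Civita derivative of $\alpha$ when applied to the Riemannian background tensors $b^j$ and $s_{ij}$.
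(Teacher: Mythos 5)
Your proposal is correct and follows essentially the same route as the paper: the explicit formula for $y_i$ in terms of $\bar{y}_i$ and $b_i$ (your simplified expression is exactly the paper's expansion after the cancellation $\rho_1 s+\rho_2=0$), the identities $s_j=0$ from Lemma \ref{CS0}, the fact $y_{i|0}=0$, and differentiation of $b^j s^i_j=0$ to obtain (\ref{L3}). Your explicit treatment of case (a) of Lemma \ref{CS0} (where $r^j_0 s_{j0}=0$ still holds) is a minor refinement over the paper, which works only with case (b), but the argument is the same.
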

\begin{proof}
The following holds
\begin{eqnarray}
g_{ij}=\rho a_{ij}+\rho_0 b_ib_j+\rho_1(b_i\alpha_j+b_j\alpha_i)+\rho_2\alpha_i\alpha_j,
\end{eqnarray}
where $\alpha_i:=\alpha^{-1}a_{ij}y^j$ and
\begin{eqnarray}
&&\rho:=\phi(\phi-s\phi'),\\
&&\rho_0:=\phi\phi''+\phi'\phi',\\
&&\rho_1:=-\Big[s(\phi\phi''+\phi'\phi')-\phi\phi'\Big],\\
&&\rho_2:=s\Big[s(\phi\phi''+\phi'\phi')-\phi\phi'\Big].
\end{eqnarray}
Then
\begin{eqnarray}
y_i:=\rho\bar{y}_i+\rho_0 b_i\beta+\rho_1(b_i\alpha+s\bar{y_i})+\rho_2\bar{y}_i,\label{a1}
\end{eqnarray}
where $\bar{y}_i:=a_{ij}y^j$. Since $\bar{y}_is^i_0=0$ then by (\ref{44}) we get $b_is^i_0=0$. Thus (\ref{a1}) implies that
\begin{eqnarray}
y_is^i_0=0.\label{a2}
\end{eqnarray}
Since $y_{i|0}=0$, then by (\ref{a2}) it follows that
\begin{eqnarray}
y_is^i_{0|0}=0.
\end{eqnarray}
By $s_j=b^js^i_{j}=0$,  we have
\begin{eqnarray}
0=(b^js^i_{j})|_0=b^{j}_{|0}s^i_{j}+b^js^i_{j|0}=(r^j_0+s^j_0)s^i_j+b^js^i_{j|0}\label{0049}
\end{eqnarray}
or equivalently
\begin{eqnarray}
b^js^i_{j|0}=-s^j_0s^i_{j}.\label{0050}
\end{eqnarray}
By (\ref{a1}) and (\ref{0050}), we get
\begin{eqnarray}
y_ib^js^i_{j|0}=-(\rho+\rho_1s+\rho_2)s^j_0s^0_{j}=(\rho+\rho_1s+\rho_2)s^j_0s_{j0}.
\end{eqnarray}
Since $\rho_1s+\rho_2=0$, then
\begin{eqnarray}
y_ib^js^i_{j|0}=\rho s^j_0s_{j0}=\phi(\phi-s\phi')s^j_0s_{j0}.
\end{eqnarray}
This completes the proof.
\end{proof}

\bigskip

\begin{lem}
Let $F=\alpha\phi(s)$, $s=\beta/\alpha$, be a non-Randers type  $(\alpha,\beta)$-metric on a manifold $M$ of dimension $n\geq 3$. Suppose that $F$ has  vanishing S-curvature.   Then the following hold
 \begin{eqnarray}
 &&b^jb^kb^lL_{jkl}=0,\label{L4}\\
 &&b^iJ_i=0.\label{L5}
 \end{eqnarray}
\end{lem}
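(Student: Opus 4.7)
The strategy rests on Lemma \ref{CS0}(b): because $F$ is non-Randers with vanishing S-curvature, $\beta$ must satisfy $r_{ij} = 0$ and $s_j = 0$. These yield at once the derived vanishings $r_{00} = 0$, $r_0 = b^i r_{i0} = 0$, and $s_0 = s_j y^j = 0$, which will drive both identities.

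For (\ref{L5}) nothing further is needed: substituting the above into formula (\ref{05}) makes both $r_{00} - 2\alpha Q s_0$ and $r_0 + s_0$ vanish identically, so $\bar J = b^i J_i = 0$.

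For (\ref{L4}) I would use the classical identity $L_{jkl} = -\tfrac{1}{2}\, y_m\, B^m{}_{jkl}$ relating the Landsberg and Berwald curvatures, where $B^m{}_{jkl} = \partial^3 G^m/\partial y^j\partial y^k\partial y^l$. Since $\alpha$ is Riemannian, its own Berwald curvature vanishes; and under $r_{00} = s_0 = 0$ formula (\ref{G1}) collapses to $G^m = G^m_\alpha + \alpha Q\, s^m_0$. Thus
\[
b^j b^k b^l\, B^m{}_{jkl} = D_b^3(\alpha Q\, s^m_0), \qquad D_b := b^p \partial_{y^p}.
\]

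The key structural observation is that $s^m_0 = s^m_p y^p$ is linear in $y$, and the antisymmetry of $s_{ij}$ with $s_j = 0$ forces $D_b s^m_0 = b^p s^m_p = -s^m = 0$. By the Leibniz rule every $D_b$ slides past $s^m_0$, so $D_b^3(\alpha Q\, s^m_0) = [D_b^3(\alpha Q)]\, s^m_0$, a scalar multiple of $s^m_0$. Contracting with $y_m$ and invoking identity (\ref{L1}), namely $y_m s^m_0 = 0$, immediately gives $b^j b^k b^l L_{jkl} = 0$. The main obstacle is purely bookkeeping: one must verify that the sign convention of the Berwald–Landsberg identity is consistent with the paper's Berwald connection, and confirm that the Leibniz expansion indeed factors $s^m_0$ out cleanly without generating off-diagonal terms. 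Once $D_b s^m_0 = 0$ is noted, both conclusions follow mechanically from previously established material.
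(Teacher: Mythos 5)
Your proposal is correct and follows essentially the same route as the paper: both reduce the spray to $G^i=G^i_\alpha+\alpha Q s^i_0$ via Lemma \ref{CS0}(b), invoke the identity $y_mB^m_{\ jkl}=-2L_{jkl}$, and kill the resulting terms using $y_ms^m_0=0$ and $b^ls^m_{\ l}=0$ (i.e.\ $s_j=0$); the paper merely writes out the full third-derivative expansion before contracting, whereas you let the observation $D_bs^m_0=0$ collapse the Leibniz expansion up front. Your treatment of (\ref{L5}) via (\ref{05}) is exactly the paper's.
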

\begin{proof}
Since $F$ has vanishing S-curvature, then (\ref{G1}) reduces to following
\begin{eqnarray}
G^i=G^i_{\alpha}+\alpha Q s^i_0.\label{045}
\end{eqnarray}
Taking  third order vertical  derivations of  (\ref{045}) with respect to $y^j$, $y^l$ and $y^k$ yields
\begin{eqnarray}
\nonumber B^i_{jkl}=\!\!\!\!&&\!\!\!\! s^i_l\Big[Q\alpha_{jk}+Q_{k}\alpha_j+Q_{j}\alpha_k+\alpha Q_{jk}\Big]\\
\!\!\!\!&+&\!\!\!\! \nonumber s^i_j\Big[Q\alpha_{lk}+Q_{k}\alpha_l+Q_{l}\alpha_k+\alpha Q_{lk}\Big]\\
\!\!\!\!&+&\!\!\!\! \nonumber s^i_k\Big[Q\alpha_{jl}+Q_{j}\alpha_l+Q_{l}\alpha_j+\alpha Q_{jl}\Big]\\
\!\!\!\!&+&\!\!\!\! \nonumber s^i_0\Big[\alpha_{jkl}Q+\alpha_{jk}Q_l+\alpha_{lk}Q_j+\alpha_{lj}Q_k\\
\!\!\!\!&+&\!\!\!\! \alpha Q_{jkl}+\alpha_lQ_{jk}+\alpha_jQ_{lk}+\alpha_kQ_{jl}\Big]\label{046}
\end{eqnarray}
Multiplying (\ref{046}) with $y_i$ and using (\ref{L1}) implies that
\begin{eqnarray}
\nonumber -2L_{jkl}=\!\!\!\!&&\!\!\!\! y_is^i_{\ l}\Big[Q\alpha_{jk}+Q_{k}\alpha_j+Q_{j}\alpha_k+\alpha Q_{jk}\Big]\\
\!\!\!\!&+&\!\!\!\! \nonumber y_is^i_{\ j}\Big[Q\alpha_{lk}+Q_{k}\alpha_l+Q_{l}\alpha_k+\alpha Q_{lk}\ \Big]\\
\!\!\!\!&+&\!\!\!\! y_is^i_{\ k}\Big[Q\alpha_{jl}+Q_{j}\alpha_l+Q_{l}\alpha_j+\alpha Q_{jl}\ \Big].\label{t1}
\end{eqnarray}
By  (\ref{44}), we have $s_j=b^js_{ij}=0$. Then, multiplying (\ref{t1}) with $b^jb^kb^l$ yields (\ref{L4}).  By (\ref{05}) and (\ref{44}),  we get (\ref{L5}).
\end{proof}
\bigskip

\begin{lem}\label{lem1}
Let $(M,F)$ be a generalized P-reducible Finsler manifold. Then the Matsumoto torsion of $F$ satisfy in following
\begin{eqnarray}
M_{ijk|s}y^s=\lambda(x,y)M_{ijk}.\label{MM}
\end{eqnarray}
\end{lem}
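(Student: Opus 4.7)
The plan is to compute $M_{ijk|s}y^s$ directly from the definition of the Matsumoto torsion and then invoke the generalized P-reducibility relation, contracted in two different ways, to repackage the result as $\lambda M_{ijk}$. The argument has three ingredients: the horizontal parallelism of the angular metric along $y$ in the Berwald connection, a trace identity extracted by contracting the P-reducibility relation with $y^i$, and another obtained by contracting with $g^{jk}$.

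First I would establish the identity $h_{ij|s}y^s=0$. Since $F_{|k}=0$ and $y^i_{|k}=0$ for the Berwald connection, differentiating $y_iy^i=F^2$ along $y^s$ gives $y_{i|s}y^s=0$, so $\ell_i:=y_i/F$ also satisfies $\ell_{i|s}y^s=0$. Combining the standard Berwald-connection identity $g_{ij|k}=-2L_{ijk}$ with $L_{ijs}y^s=0$ (which follows from $C_{ijk}y^k=0$) yields $g_{ij|s}y^s=0$; since $h_{ij}=g_{ij}-\ell_i\ell_j$, we conclude $h_{ij|s}y^s=0$. Applying $(\cdot)_{|s}y^s$ to the Matsumoto torsion, and using $C_{ijk|s}y^s=L_{ijk}$ and $I_{i|s}y^s=J_i$, we arrive at the key intermediate identity
\[
M_{ijk|s}y^s=L_{ijk}-\frac{1}{n+1}\big(J_ih_{jk}+J_jh_{ik}+J_kh_{ij}\big).
\]

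Next I would extract two consequences of the generalized P-reducibility relation $L_{ijk}=\lambda C_{ijk}+a_ih_{jk}+a_jh_{ki}+a_kh_{ij}$. Contracting with $y^i$ and invoking $L_{ijk}y^i=C_{ijk}y^i=h_{ij}y^i=0$ collapses all but one term, forcing $(a_iy^i)h_{jk}=0$ and hence $a_iy^i=0$. Contracting instead with $g^{jk}$, using $(g^{jk})_{|s}y^s=0$ so that $J_i=g^{jk}L_{ijk}$, together with $g^{jk}h_{jk}=n-1$, $g^{jk}h_{ki}=\delta^j_i-\ell^j\ell_i$, and $a_iy^i=0$, yields $J_i=\lambda I_i+(n+1)a_i$.

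Finally, solving this last identity for $a_i$ and substituting back into the P-reducibility relation rearranges to
\[
L_{ijk}-\frac{1}{n+1}\big(J_ih_{jk}+J_jh_{ik}+J_kh_{ij}\big)=\lambda\Big\{C_{ijk}-\frac{1}{n+1}\big(I_ih_{jk}+I_jh_{ik}+I_kh_{ij}\big)\Big\}=\lambda M_{ijk}.
\]
Comparison with the intermediate identity from the first step delivers $M_{ijk|s}y^s=\lambda M_{ijk}$. The only genuine hurdle is the verification of $h_{ij|s}y^s=0$, which depends on the (easy to misremember) formula $g_{ij|k}=-2L_{ijk}$ for the Berwald connection; once this is in hand, the remainder is a short algebraic manipulation driven by the two contractions of the P-reducibility identity.
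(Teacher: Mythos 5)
Your proposal is correct and follows essentially the same route as the paper: contract the generalized P-reducibility relation with the inverse metric to obtain $J_k=\lambda I_k+(n+1)a_k$, solve for $a_k$, substitute back, and recognize the result as $M_{ijk|s}y^s=\lambda M_{ijk}$. You are in fact slightly more careful than the paper, which silently uses $a_iy^i=0$ (your $y^i$-contraction) and leaves the identity $M_{ijk|s}y^s=L_{ijk}-\frac{1}{n+1}(J_ih_{jk}+J_jh_{ik}+J_kh_{ij})$, i.e.\ $h_{ij|s}y^s=0$, to an unexplained ``simplifying'' step.
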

\begin{proof}
Let $F$ be a generalized P-reducible metric
\begin{equation} \label{P2}
L_{ijk}=\lambda C_{ijk}+ a_ih_{jk}+a_jh_{ki}+a_kh_{ij}.
\end{equation}
Contracting (\ref{P2}) with $g^{ij}:=(g_{ij})^{-1}$  and using the relations $g^{ij}h_{ij}=n-1$ and $g^{ij}(a_ih_{jk})=g^{ij}(a_jh_{ik})=a_k$ implies that
\begin{equation} \label{P3}
J_k=\lambda I_k+ (n+1)a_k.
\end{equation}
Then
\begin{equation} \label{P4}
a_i=\frac{1}{n+1}J_i-\frac{\lambda}{n+1} I_i.
\end{equation}
Putting (\ref{P4}) in (\ref{P2}) yields
\begin{eqnarray}
\nonumber L_{ijk}= \lambda C_{ijk}\!\!\!\!&+&\!\!\!\!\ \frac{1}{n+1}\big\{J_ih_{jk}+J_jh_{ki}+J_kh_{ij}\big\}\\ \!\!\!\!&-&\!\!\!\!\ \frac{\lambda}{n+1}\big\{I_ih_{jk}+I_jh_{ki}+I_kh_{ij}\big\}.\label{P5}
\end{eqnarray}
By simplifying  (\ref{P5}), we get (\ref{MM}).
\end{proof}

\bigskip

\begin{lem}\label{lem4}
Let $F=\alpha\phi(s)$, $s=\beta/\alpha$, be a non-Randers type  $(\alpha,\beta)$-metric on a manifold $M$ of dimension $n\geq 3$. Suppose that $F$  is a generalized P-reducible metric with vanishing S-curvature. Then $F$ is a P-reducible metric.
\end{lem}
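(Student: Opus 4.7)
The plan is to combine the Matsumoto--eigenvalue identity of Lemma \ref{lem1} with the two scalar vanishings of the preceding lemma to force $\lambda\,M_{ijk}\equiv 0$, leaving only the P-reducible piece inside the Landsberg tensor.

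First, I would re-use identity (\ref{P5}) from inside the proof of Lemma \ref{lem1}, which after eliminating $a_i$ rewrites the generalized P-reducibility condition in the equivalent form
\[
L_{ijk} = \lambda\,M_{ijk} + \frac{1}{n+1}\bigl\{J_i h_{jk} + J_j h_{ki} + J_k h_{ij}\bigr\}.
\]
To prove that $F$ is P-reducible it therefore suffices to show that $\lambda\,M_{ijk}=0$.

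Next, I would contract this identity with $b^i b^j b^k$. The left-hand side vanishes by (\ref{L4}), and by symmetry the bracketed term on the right collapses to $\frac{3}{n+1}(b^i J_i)(b^j b^k h_{jk})$, whose first factor is the mean Landsberg scalar $\bar J=0$ by (\ref{L5}). What remains is the single scalar equation
\[
\lambda\,\bigl(b^i b^j b^k M_{ijk}\bigr) = 0.
\]

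Finally, I would analyse the scalar factor $b^i b^j b^k M_{ijk}$ using the semi-C-reducibility lemma, which, since $F$ is a non-Riemannian $(\alpha,\beta)$-metric in dimension $n\ge 3$, writes $M_{ijk}$ as $q$ times a fixed combination of $I_i$, $h_{ij}$ and $\|\mathbf{I}\|^2$ with $q=1-p$. If $q\equiv 0$ then $M_{ijk}\equiv 0$, so by Lemma \ref{MaHo} the metric is Randers or Kropina; the non-Randers-type hypothesis forces $F$ to be Kropina, which is C-reducible and therefore P-reducible. Otherwise, one verifies from the standard $(\alpha,\beta)$-expressions for $I_i$ and $h_{ij}$ that $b^i b^j b^k M_{ijk}$ is a non-trivial rational function of $s$, so the scalar equation above forces $\lambda=0$; substituting back into the displayed identity gives the P-reducible form of $L_{ijk}$.

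The main obstacle is the final case: one must check explicitly that for a non-Randers, non-Kropina $(\alpha,\beta)$-metric the scalar $b^i b^j b^k M_{ijk}$ does not vanish identically in $y$ (i.e.\ that neither $b^iI_i\equiv 0$ nor the algebraic relation $(n+1)(b^iI_i)^2 = 3(b^jb^kh_{jk})\|\mathbf{I}\|^2$ can hold identically). Everything else is a straightforward consequence of the previous two lemmas and the reformulation (\ref{P5}) of generalized P-reducibility.
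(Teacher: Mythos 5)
Your reduction is exactly the paper's: rewrite generalized P-reducibility as $L_{ijk}=\lambda M_{ijk}+\frac{1}{n+1}\{J_ih_{jk}+J_jh_{ki}+J_kh_{ij}\}$ via (\ref{P5}), contract with $b^ib^jb^k$, kill the left side with (\ref{L4}) and the $J$-terms with (\ref{L5}), and land on $\lambda\,b^ib^jb^kM_{ijk}=0$, which via semi-C-reducibility factors as
\[
\frac{3q}{n+1}\,(b^iI_i)\Bigl[b^jb^kh_{jk}-\frac{(n+1)(b^mI_m)^2}{3\|\textbf{I}\|^2}\Bigr]\cdot\lambda=0 .
\]
Up to this point you match the paper step for step, and your disposal of the $q=0$ branch (C-reducible, hence P-reducible) is fine.

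The genuine gap is the step you yourself flag as ``the main obstacle'': showing that when $q\neq 0$ the scalar $b^ib^jb^kM_{ijk}$ cannot vanish, i.e.\ ruling out both $b^iI_i=0$ and the bracket above. Saying ``one verifies from the standard expressions that this is a non-trivial rational function of $s$'' is not a proof, and neither branch is a routine computation. The paper handles them separately. For $b^iI_i=0$ it invokes the explicit formula $I_i=-\frac{\Phi(\phi-s\phi')}{2\Delta\phi\alpha^2}(\alpha b_i-s\bar y_i)$, so that contracting with $b^i$ produces the factor $b^2\alpha^2-\beta^2$, forcing $\Phi=0$ or $\phi-s\phi'=0$ and hence $\textbf{I}=0$, i.e.\ $F$ Riemannian --- a contradiction. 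For the vanishing of the bracket it does \emph{not} argue by rational functions of $s$ at all: it introduces $G_{ij}:=g_{ij}-\frac{n+1}{3\|\textbf{I}\|^2}I_iI_j$, observes $G_{ij}y^iy^j=F^2$ and $y^iI_i=0$, recognizes the relation as the equality case $\bigl[G_{ij}b^i\frac{y^j}{F}\bigr]^2=\bigl[G_{ij}b^ib^j\bigr]\bigl[G_{ij}\frac{y^i}{F}\frac{y^j}{F}\bigr]$ of a Cauchy--Schwarz type inequality, concludes $b^i=k\,y^i/F$, and derives the absurdity $(b^2-s^2)\alpha^2=0$. Without supplying arguments of this kind your proof is incomplete precisely at the point where the lemma's content lies; everything before it is bookkeeping from the two preceding lemmas.
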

\begin{proof}
Let $F$ be a generalized P-reducible metric. By Lemma \ref{lem1}, we have
\begin{eqnarray}
L_{ijk}-\frac{1}{n+1}(J_ih_{jk}+J_jh_{ik}+J_kh_{ij})=\lambda\Big[C_{ijk}
-\frac{1}{n+1}(I_ih_{jk}+I_jh_{ik}+I_kh_{ij})\Big].\label{1}
\end{eqnarray}
Contracting (\ref{1}) with $b^ib^jb^k$  and using (\ref{L4}) and (\ref{L5}), implies that
\begin{eqnarray}
\lambda\Big[b^ib^jb^kC_{ijk}-\frac{3}{n+1}(b^iI_i)(b^jb^kh_{jk})\Big]=0.\label{4}
\end{eqnarray}
By (\ref{4}), we get two cases as follows:\\\\
\textbf{Case (1):}  $\lambda=0$. In this case, $F$ reduces to a P-reducible metric.\\\\
\textbf{Case (2):} $\lambda\neq 0$. In this case, by (\ref{4}) we get
\begin{eqnarray}
b^ib^jb^kC_{ijk}=\frac{3}{n+1}(b^iI_i)(b^jb^kh_{jk}).\label{5}
\end{eqnarray}
Multiplying (\ref{m1}) with $b^ib^jb^k$ gives
\begin{eqnarray}
b^ib^jb^k C_{ijk}=\frac{3p}{n+1}(b^iI_i)(b^j b^k h_{jk})+\frac{q}{\|\textbf{I}\|^2}(b^iI_i)^3.\label{6}
\end{eqnarray}
By (\ref{5}) and (\ref{6}), it follows that
\begin{eqnarray}
\frac{3q}{n+1}(b^iI_i)\Bigg[b^j b^k h_{jk}-\frac{(n+1)(b^mI_m)^2}{3\|\textbf{I}\|^2}\Bigg]=0.\label{6.5}
\end{eqnarray}
By (\ref{6.5}), we get three cases as follows:\\\\
\textbf{Case (2a):} Let $b^iI_i=0$. By a direct computation, we can obtain a formula for the mean Cartan torsion of $(\alpha,\beta)$-
metrics as follows
\begin{eqnarray}
I_i=-\frac{\Phi(\phi-s \phi')}{2\Delta\phi\alpha^2}(\alpha b_i-s y_i).\label{08}
\end{eqnarray}
If $b^iI_i=0$, then by contracting (\ref{08}) with $b^i$, we get
\begin{eqnarray}\label{09}
\frac{\Phi(\phi-s \phi')}{2\Delta\phi\alpha^3}( b^2\alpha^2-\beta^2)=0.
\end{eqnarray}
By (\ref{09}),  we have $\Phi=0$ or $\phi-s \phi'=0$ which implies that ${\bf I}=0$ and  then $F$ is a Riemannian metric. This is a  contradiction with our assumptions.  \\\\
\textbf{Case (2b):} Suppose that the following holds
\be
b^j b^k h_{jk}-\frac{n+1}{3\|\textbf{I}\|^2}(b^iI_i)^2=0.\label{al1}
\ee
Since $h_{jk}=g_{jk}-F^{-2}g_{jm}g_{kl}y^my^l$, then
\be
b^jb^k h_{jk}=b^jb^k g_{jk}-\frac{1}{F^2}\big(g_{jk}b^jb^k\big)^2\label{al2}
\ee
By (\ref{al1}) and (\ref{al2}), we obtain
\begin{eqnarray}
b^jb^k\Bigg[ g_{jk}-\frac{n+1}{3\|\textbf{I}\|^2}\ I_jI_k \Bigg]=\Bigg[\frac{1}{F}g_{jk}b^jb^k\Bigg]^2.\label{al3}
\end{eqnarray}
Since $y^iI_i=0$, then by (\ref{al3}), we get
\begin{eqnarray}
\Bigg[\Big(g_{ij}-\frac{(n+1)I_iI_j}{3\|\textbf{I}\|^2}\Big)b^i\frac{y^j}{F}\Bigg]^2=\Bigg[\Big(g_{ij}
-\frac{(n+1)I_iI_j}{3\|\textbf{I}\|^2}\Big)b^ib^j\Bigg].\label{40}
\end{eqnarray}
Put
\[
G_{ij}:=g_{ij}-\frac{n+1}{3\|\textbf{I}\|^2}I_iI_j.
\]
It follows from (\ref{40})  that
\begin{eqnarray}
\Big[G_{ij}b^i\frac{y^j}{F}\Big]^2=G_{ij}b^ib^j.\label{7}
\end{eqnarray}
Since $G_{ij}y^iy^j=F^2$, then  (\ref{7}) implies that
\begin{eqnarray}
\Big[G_{ij}b^i\frac{y^j}{F}\Big]^2=\Big[G_{ij}b^ib^j\Big]\Big[G_{ij}\frac{y^i}{F}\frac{y^j}{F}\Big].\label{8}
\end{eqnarray}
By Cauchy-Schwartz type inequality and (\ref{8}), we have
\be
b^i=k\frac{y^i}{F},\label{k}
\ee
where $k$ is a real constant. Multiplying (\ref{k}) with $b_i$ and $\bar{y_i}$, respectively,  implies that
\be
F=k\beta/b^2 \ \ \ \  \textrm{and} \ \ \  F=k\alpha^2/\beta.\label{k2}
\ee
By (\ref{k2}), it follows that $(b^2-s^2)\alpha^2=0$ which is a contradiction.\\\\
\textbf{Case (2c):} If $q=0$  then $p=1$ and from (\ref{m1}), it results that $F$ is C-reducible. In any cases, $F$ is a P-reducible Finsler metric.
\end{proof}

\bigskip

Now, we are going to consider P-reducible $(\alpha,\beta)$-metrics with vanishing S-curvature.

\begin{lem}\label{lem5}
Let $F=\alpha\phi(s)$, $s=\beta/\alpha$, be a non-Randers type  $(\alpha,\beta)$-metric on a manifold $M$ of dimension $n\geq 3$. Suppose that $F$  is a P-reducible metric with vanishing S-curvature. Then $F$ reduces to a Berwald metric or C-reducible metric.
\end{lem}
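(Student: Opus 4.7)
The plan is first to apply Lemma \ref{CS0} to the vanishing S-curvature hypothesis ($c=0$). Case (a) with $\varepsilon=0$ collapses to case (b), while case (a) with $k=0$ forces $\Phi=0$; by (\ref{08}) this yields ${\bf I}=0$, hence $F$ is Riemannian (by Deicke's theorem) and in particular Berwald, so the conclusion holds. In the remaining case we have $r_{ij}=0$ and $s_j=0$, so the identities (\ref{L1})--(\ref{L5}) and the simplification $G^i=G^i_\alpha+\alpha Q s^i_0$ of the spray coefficients are in force.

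Under these conditions, I specialize the mean Landsberg formula (\ref{meanlanndsberg}): since $r_0$, $s_0$, $r_{00}$, $r_{i0}$ and the scalar $s_i$ all vanish, the expression collapses to
\[J_i=-\frac{\Phi}{2\alpha\Delta}\,s_{i0}.\]
Likewise, multiplying the Berwald curvature formula (\ref{046}) with $y_i$ and using (\ref{L1}) produces the Landsberg expression (\ref{t1}); the decomposition (\ref{a1}) together with $b^j s_{jk}=s_k=0$ reduces $y_i s^i_k$ to $-\rho\, s_{k0}$ with $\rho:=\phi(\phi-s\phi')$. Substituting both into the P-reducibility identity $L_{jkl}=\frac{1}{n+1}(J_j h_{kl}+J_k h_{jl}+J_l h_{jk})$ then yields the key equation
\[\rho\,(n+1)\alpha\Delta\bigl[s_{l0}A_{jk}+s_{j0}A_{kl}+s_{k0}A_{jl}\bigr]+\Phi\bigl[s_{j0}h_{kl}+s_{k0}h_{jl}+s_{l0}h_{jk}\bigr]=0,\]
where $A_{jk}:=Q\alpha_{jk}+Q_j\alpha_k+Q_k\alpha_j+\alpha Q_{jk}$.

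Next I extract a scalar relation by contracting with $b^jb^k$: the four terms carrying $b^l s_{l0}=s_0=0$ drop out, and what survives is $s_{l0}\bigl[\rho(n+1)\alpha\Delta\,(b^jb^kA_{jk})+\Phi\,(b^jb^kh_{jk})\bigr]=0$. Here $h_{jk}b^jb^k$ and $A_{jk}b^jb^k$ admit closed expressions in terms of $\alpha$, $\beta$, $b^2$ and derivatives of $\phi$ (obtained using $\alpha_{jk}b^jb^k=(b^2-s^2)/\alpha$, $Q_jb^j=Q'(b^2-s^2)/\alpha$ and (\ref{a1})). Treating $\alpha$ as algebraically independent of the rational (in $y$) expressions---the standard rational/irrational splitting for $(\alpha,\beta)$-metrics---decouples the resulting identity into the dichotomy: either $s_{l0}\equiv 0$, forcing $s_{ij}\equiv 0$, in which case $b_{i|j}=r_{ij}+s_{ij}=0$ and $G^i=G^i_\alpha$ show that $F$ is a Berwald metric; or $\phi$ satisfies an ODE whose only non-Randers, non-Riemannian solution is $\phi(s)=1/s$, so $F=\alpha^2/\beta$ is a Kropina metric and hence C-reducible by Lemma \ref{MaHo}.

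The main obstacle is this final step: the rational/irrational separation after contracting with $b^jb^k$ is a technical but mechanical calculation, and verifying that the resulting $\phi$-ODE admits only Kropina among non-Randers, non-Riemannian solutions requires a standard case-by-case analysis exploiting that $Q$, $\Delta$ and $\Phi$ are rational in $s=\beta/\alpha$.
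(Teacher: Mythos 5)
Your setup is sound and, up to the key tensorial identity, equivalent to the paper's: the reduction via Lemma \ref{CS0} to $r_{ij}=0$, $s_j=0$ (your handling of case (a) is actually more explicit than the paper's), the formula $J_i=-\frac{\Phi}{2\alpha\Delta}s_{i0}$, and your expression $L_{jkl}=\frac{\rho}{2}\{A_{jk}s_{l0}+A_{kl}s_{j0}+A_{jl}s_{k0}\}$ all check out --- one can verify that $\frac{\rho}{2}A_{jk}$ coincides with the tensor $V_{jk}=\frac{\rho}{2\alpha^3}[Q''h_jh_k+(Q-sQ')T_{jk}]$ that the paper obtains from Shen's Landsberg formula. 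Your disposal of the case $s_{i0}\equiv 0$ ($b_{i|j}=0$, hence $G^i=G^i_\alpha$ and $F$ is Berwald) is also fine, and arguably cleaner than the paper's appeal to Shen's theorem on Landsberg $(\alpha,\beta)$-metrics.

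The gap is in the final step. Contracting the P-reducibility identity once with $b^jb^k$ produces only \emph{one} scalar relation, namely
\[
\rho\big[(n+1)\Delta(Q-sQ')+\Phi\big]+(b^2-s^2)\big[(n+1)\Delta\rho\,Q''+\Phi\,\phi\phi''\big]=0 ,
\]
which is precisely the sum of the paper's two equations (\ref{e1}) and $(b^2-s^2)\times$(\ref{e2}). Since everything after this contraction is a function of $s$ alone (all terms are homogeneous of degree zero), there is no residual rational/irrational splitting in $\alpha$ left to exploit: you genuinely have a single third-order ODE for $\phi$ (it involves $\phi'''$ through $Q''$), whose local solution space is far larger than the union of Randers-type and Kropina-type solutions. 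So the claimed dichotomy ``Berwald or $\phi=1/s$'' does not follow. To separate the coefficient of $a_{jk}$ from that of $b_jb_k$ you need a second, independent contraction: the paper first contracts with $s^i_0s^j_0s^k_0$ (using $b_is^i_0=\alpha_is^i_0=0$ to isolate the $a_{ij}$-part and obtain $\frac{1}{n+1}\phi(\phi-s\phi')=-\frac{\Delta\rho(Q-sQ')}{\Phi}$), and only then contracts with $b^ib^j$ to get $\frac{1}{n+1}\phi\phi''=-\frac{\Delta\rho Q''}{\Phi}$; dividing these yields the separable ODE $\phi-s\phi'=c\,(Q-sQ')$, whose non-Randers solutions are $\phi=-\frac{1}{2c_1s}+\frac{c_2}{2c_1}s$ --- a Randers change of a Kropina metric, not Kropina itself --- which is C-reducible. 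Your argument as written cannot reach this conclusion without that second contraction.
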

\begin{proof}
The Landsberg curvature of an $(\alpha, \beta)$-metric is given by
\begin{equation}
L_{ijk}=\frac{-\rho}{6\alpha^5}\Big\{h_ih_jC_k+h_jh_kC_i+h_ih_kC_j+3E_iT_{jk}+3E_jT_{ik}+3E_kT_{ij}\Big\},\label{Landsberg}
\end{equation}
where
\begin{eqnarray}
h_i\!\!\!\!&:=&\!\!\!\! \alpha  b_i-s\bar{y}_i,\label{hi} \\
T_{ij}\!\!\!\!&:=&\!\!\!\! \alpha^2a_{ij}-\bar{y}_i\bar{y}_j,\label{tttt}\\
\nonumber C_i\!\!\!\!&:=&\!\!\!\!  (X_4r_{00}+Y_4\alpha s_0)h_i +3\Lambda D_i, \\
\nonumber E_i\!\!\!\!&:=&\!\!\!\!  (X_6r_{00}+Y_6\alpha s_0)h_i +3\mu D_i,\\
\nonumber D_i\!\!\!\!&:=&\!\!\!\!  \alpha^2(s_{i0}+\Gamma r_{i0}+\Pi \alpha s_i)-(\Gamma r_{00}+\Pi \alpha s_0)\bar{y}_i
\\
\nonumber X_4\!\!\!\!&:=&\!\!\!\! \frac{1}{2\Delta^2}\Big\{ -2\Delta Q'''+3(Q-sQ')Q''+3(b^2-s^2)(Q'')^2\Big\}, \\
\nonumber X_6\!\!\!\!&:=&\!\!\!\! \frac{1}{2\Delta^2}\Big\{(Q-sQ')^2+\big[2(s+b^2Q)-(b^2-s^2)(Q-sQ')\big]Q''\Big\},\\
\nonumber Y_4\!\!\!\!&:=&\!\!\!\!  -2QX_4+\frac{3Q'Q''}{\Delta},\\
\nonumber Y_6\!\!\!\!&:=&\!\!\!\!  -2QX_6+\frac{(Q-sQ')Q'}{\Delta},\\
\nonumber \Lambda\!\!\!\!&:=&\!\!\!\!  -Q'',\ \ \  \mu:=-\frac{1}{3}(Q-sQ'),\\
\nonumber \Gamma\!\!\!\!&:=&\!\!\!\!\frac{1}{\Delta},\  \ \   \Pi:=\frac{-Q}{\Delta}.
\end{eqnarray}
For more details see  \cite{Shen}. Since $r_{ij}=0$ and $s_i=0$,  then (\ref{meanlanndsberg}) and (\ref{Landsberg})  reduce to following
\begin{eqnarray}
J_i\!\!\!\!&=&\!\!\!\! - \frac{\Phi}{2\alpha\Delta} s_{i0},\label{meanlanndsberg2}\\
L_{ijk}\!\!\!\!&=&\!\!\!\! V_{ij}s_{k0}+V_{jk}s_{i0}+V_{ki}s_{j0},\label{Landsberg1}
\end{eqnarray}
where
\[
V_{ij}:=\frac{\rho}{2\alpha^3}\Big[Q''h_ih_j+(Q-sQ')T_{ij}\Big].
\]
We shall divide the problem into two cases: (a) $s_{i0}= 0$ and (b) $s_{i0}\neq 0$.\\\\
{\bf Case (a):} Let $s_{i0}= 0$. In this case, by (\ref{meanlanndsberg2}) and  (\ref{Landsberg1}), $F$ reduces to a Landsberg metric. By Shen' Theorem in \cite{Shen}, $F$ reduces to a Berwald metric.\\\\
{\bf Case (b):} Let $s_{i0}\neq 0$. Then by  (\ref{meanlanndsberg2}) and (\ref{Landsberg1}), we have
\begin{eqnarray}
L_{ijk}= Z_{ij}J_k+Z_{jk}J_i+Z_{ki}J_j,\label{yy}
\end{eqnarray}
where $Z_{ij}:= -\frac{2\alpha\Delta}{\Phi}V_{ij}$. Thus the Landsberg curvature of an  $(\alpha, \beta)$-metric with vanishing S-curvature satisfies (\ref{yy}). Put
\[
A:=-\frac{\Delta\rho(Q-sQ')}{\Phi}, \ \ \ \ B:=-\frac{\Delta\rho Q''}{\Phi}.
\]
Then by putting (\ref{hi}) and (\ref{tttt}) in the formula of $Z_{ij}$ it follows that
\begin{eqnarray}
Z_{ij}=A a_{ij}+B b_ib_j-sB(b_i\alpha_j+b_j\alpha_i)-(A-s^2B)\alpha_i\alpha_j.\label{Z}
\end{eqnarray}
By assumption, $F$ is P-reducible
\begin{eqnarray}
L_{ijk}=\frac{1}{n+1}(J_ih_{jk}+J_jh_{ik}+J_kh_{ij}),\label{q1}
\end{eqnarray}
where the angular metric $h_{ij}:= g_{ij} -F_{y^i}F_{y^j}$ is given by following
\[
h_{ij} =\phi [  \phi -s \phi']a_{ij}+\phi \phi''\ b_ib_j  - s \phi\phi'' [b_i\alpha_j + b_j \alpha_i ] -\big[  \phi (  \phi - s \phi')  -s^2 \phi\phi''\big]\ \alpha_i \alpha_j.\label{angular}
\]
By (\ref{yy}) and (\ref{q1}), we obtain
\begin{equation}
(Z_{ij}-\frac{1}{n+1}h_{ij})J_k+(Z_{jk}-\frac{1}{n+1}h_{jk})J_i+(Z_{ik}-\frac{1}{n+1}h_{ik})J_j=0.\label{q2}
\end{equation}
Since $\alpha_i s^i_0=0$ and $b_is^i_0=0$, then we have
\begin{eqnarray*}
&&s^i_0s^j_0Z_{ij}=-\frac{\Delta\rho}{\Phi}(Q-sQ')s^m_0s_{m0},\\
&&s^i_0s^j_0h_{ij}=\phi [  \phi -s \phi']s^m_0s_{m0},\\
&&s^i_0J_i= - \frac{\Phi}{2\alpha\Delta}s^m_0s_{m0}.
\end{eqnarray*}
Therefore, contracting (\ref{q2}) with $s^i_0s^j_0s^k_0$ implies that
\begin{equation}
\frac{1}{n+1}\phi [  \phi -s \phi']=A.\label{q3}
\end{equation}
By (\ref{q3}), it follows that
\begin{eqnarray}
Z_{ij}-\frac{1}{n+1}h_{ij}=\chi \Big[b_ib_j- s(b_i\alpha_j + b_j \alpha_i )+ s^2\alpha_i \alpha_j\Big],\label{q4}
\end{eqnarray}
where
\[
\chi:=B-\frac{1}{n+1}\phi\phi''.
\]
Since $J_i\neq0$ and $b^mJ_m=0$, then by multiplying (\ref{q2}) with $b^ib^j$, we get
\begin{equation}
b^ib^j(Z_{ij}-\frac{1}{n+1}h_{ij})=0.\label{A00}
\end{equation}
By contracting (\ref{q4}) with $b^ib^j$ and considering (\ref{A00}),  it follows that
\begin{eqnarray}
\chi =0.\label{A000}
\end{eqnarray}
(\ref{q3}) and (\ref{A000}) imply that
\begin{eqnarray}
\frac{1}{n+1}\phi [  \phi -s \phi']\!\!\!\!&=&\!\!\!\! -\frac{\Delta\rho}{\Phi}(Q-sQ'),\label{e1} \\
\frac{1}{n+1}\phi \phi''\!\!\!\!&=&\!\!\!\! -\frac{\Delta\rho}{\Phi} Q''.\label{e2}
\end{eqnarray}
By (\ref{e1}) and (\ref{e2}), we obtain
\be
 \phi -s \phi'=c(Q-sQ'),\label{e3}
\ee
where $c$ is a non-zero real constant. Solving (\ref{e3}) implies that
\be
Q=c_1\phi+c_2s,\label{e4a}
\ee
where $c_1\neq 0$ and $c_2$ are real constants. By  (\ref{e4a}), it follows that
\be
c_2s^2+2c_1s\phi+1=d\phi^2,\label{e4}
\ee
where $d$ is a real constant. We shall divide the problem into two cases: (b1) $d\neq 0$ and (b2) $d = 0$.\\\\
{\bf Subcase (b1):} If $d\neq0$, then by (\ref{e4})  we have
\be
\phi=\frac{c_1}{d}s+\sqrt{\big[(\frac{c_1}{d})^2+\frac{c_2}{d}\big]s^2+1}    \label{e6}
\ee
which is a Randers-type metric. This is  a contradiction.\\\\
{\bf Subcase (b2):} If $d=0$, then  (\ref{e4}) yields
\be
\phi=-\frac{1}{2c_1 s}+\frac{c_2}{2c_1}s\label{e5}
\ee
which is a Randers change of a Kropina metric. It is known that,  Kropina metrics are C-reducible. On the other hand, every Randers change of C-reducible metric is C-reducible \cite{M2}. Then the Finsler metric defined by (\ref{e5}) is C-reducible.
\end{proof}

\bigskip

\noindent
{\bf Proof of Theorem \ref{mainthm}:} Every two-dimensional Finsler surface is C-reducible. For  Finsler manifolds of dimension $n\geq 3$, by Lemmas \ref{lem4} and \ref{lem5}, we get the proof.
\qed


\noindent
Akbar Tayebi and Hassan Sadeghi\\
Department of Mathematics, Faculty  of Science\\
University of Qom \\
Qom. Iran\\
Email:\ akbar.tayebi@gmail.com\\
Email:\ sadeghihassan64@gmail.com
\end{document}